\newtheorem{theorem}{Theorem}[section]
\newtheorem{lemma}{Lemma}[section]
\newtheorem{cor}{Corollary}[section]
\numberwithin{equation}{section}
\title{\uppercase{On an Inverse Problem for Sturm-Liouville Equation}}
\author[D. Karahan and Kh. R. Mamedov ]{ D\"one Karahan and Khanlar R. Mamedov}
\address{Department of Mathematics\newline \indent
Harran University\newline \indent
Sanlıurfa, 63000
Turkey}
\email{dkarahan@harran.edu.tr}
\address{
Department of Mathematics
 \newline \indent
Mersin University\newline \indent
 Mersin, 33000, Turkey}
\email{hanlar@mersin.edu.tr}
\subjclass[2010]{34A55; 34B24}
\keywords{Sturm-Liouville operator, inverse problem, necessary and sufficient conditions.}
\date{}
\begin{document}

\begin{abstract}
In this study, the theorem on necessary and sufficient conditions for the solvability of inverse problem for Sturm-Liouville operator with discontinuous coefficient is proved and the algorithm of reconstruction of potential from spectral data (eigenvalues and normalizing numbers) is given.
\end{abstract}

\maketitle

\section{Introduction}
We consider the boundary value problem 
\begin{equation}
-y''+q(x)y=\lambda ^{2}\rho (x)y,\ 0\leq x\leq \pi,  \label{1}
\end{equation}
\begin{equation}
y'(0)=0, \quad y(\pi )=0,  \label{2}
\end{equation}
where $q\left( x\right) \in L_{2}\left( 0,\pi \right) $ is a real-valued function, $\rho(x)$ is a piecewise continuous function, $\lambda $ is a complex parameter. This spectral problem appears while solving wave or heat equations for nonhomogeneous density of the material \cite{Hal}, \cite{Tik-Sam}. Physical applications of discontinuous Sturm-Liouville problem are given in \cite{Tikh}-\cite{Fre}.

For simplicity, we will assume that the density function has only one discontinuity point such that   
\begin{equation}
\rho (x)=\left\{ 
\begin{array}{c}
1,\ \ 0\leq x\leq a, \\ 
\alpha ^{2},\ a<x\leq \pi, 
\end{array}%
\right.   \label{3}
\end{equation}
where $ 0<\alpha\neq 1$.

Direct problem of spectral analysis for Sturm-Liouville problem is investigated properties of eigenvalues and eigenfunctions, finding normalizing numbers, spectrum set of the boundary value problem, scattering data and some other values. It is important to investigate these properties. Inverse problem of spectral analysis is to final the coefficient of the equation for given spectral data. This has to be done uniquely, so that it gives the uniqueness of the inverse problem. In the process of the solution of the inverse problem giving an algorithm for constructing the potential is important. For $\rho(x)\equiv 1$, solutions of inverse problem for equation (\ref{1}) is given by \cite{Lev-Gas}-\cite{Gul}. For $\rho(x)\neq 1$, under different boundary conditions similar problem is solved in \cite{Akh1}-\cite{Mam-Kar2}. When boundary conditions contain spectral parameter, it is solved by \cite{Mam-Cet1}, \cite{Mam-Cet2}.

The inverse problem for this equation is to find necessary and sufficient conditions for any data set to be spectral data. The main of this work is to find these conditions for (\ref{1}), (\ref{2}) boundary value problem. Firstly spectral data is defined. Characteristic properties of these values are investigated in \cite{Mam-Kar} and also uniqueness of the solution of the inverse problem is proved.

Consequently, in this work for (\ref{1}), (\ref{2}) spectral problem, solution of the inverse problem is given with respect to the spectral data.

For (\ref{1}), (\ref{2}) boundary value problem in \cite{Mam-Kar}, it is shown that the real numbers $\left\{ \lambda _{n}^{2},\alpha _{n}\right\}_{n\geq 1}$ satisfy the following   
\begin{equation}
\lambda _{n}=\lambda _{n}^{0}+\frac{d_{n}}{\lambda _{n}^{0}}+\frac{k_{n}}{n}, \quad \alpha_n=\alpha _{n}^{0}+\frac{t_{n}}{n}, \quad \{k_n\},\{t_n\}\in l_2, \label{4} 
\end{equation}
where $\lambda _{n}^{0}$ are zeros of the function 
\begin{equation*}
\Delta _{0}(\lambda )=\frac{1}{2}(1+\frac{1}{\alpha})\cos \lambda \mu ^{+}(\pi )+
\frac{1}{2}(1-\frac{1}{\alpha})\cos \lambda \mu ^{-}(\pi ),
\end{equation*}
\begin{equation*}
d_{n}=\frac{h^{+}\sin \lambda _{n}^{0}\mu ^{+}(\pi )+h^{-}\sin \lambda
_{n}^{0}\mu ^{-}(\pi )}{\frac{1}{2}(1+\frac{1}{\alpha})\mu ^{+}(\pi )\sin \lambda
_{n}^{0}\mu ^{+}(\pi )+\frac{1}{2}(1-\frac{1}{\alpha})\mu ^{-}(\pi )\sin \lambda
_{n}^{0}\mu ^{-}(\pi )}
\end{equation*}
is a bounded sequence.

In \cite{Akh2} it is proved, that the solution $\varphi (x,\lambda )$ of the
equation (\ref{1}) with initial date $\varphi (0,\lambda )=1,$ $\varphi'(0,\lambda )=0$ can be represented as
\begin{equation}
\varphi (x,\lambda )=\varphi _{0}(x,\lambda )+\int_{0}^{\mu
^{+}(x)}A(x,t)\cos \lambda tdt,  \label{5}
\end{equation}%
where $A(x,t)$ belongs to the space $L_{2}(0,\pi )$ for each fixed $x\in
[0,\pi ]$ and is related to the coefficient $q(x)$ of the equation (\ref{1}) by the formula:
\begin{equation}
\frac{d}{dx}A(x,\mu ^{+}(x))=\frac{1}{4\sqrt{\rho (x)}}\left( 1+\frac{1}{%
\sqrt{\rho (x)}}\right) q(x),  
\end{equation}%
\begin{equation}
\varphi _{0}(x,\lambda )=\frac{1}{2}\left( 1+\frac{1}{\sqrt{\rho (x)}}%
\right)\cos \lambda \mu ^{+}(x)+\frac{1}{2}\left( 1-\frac{1%
}{\sqrt{\rho (x)}}\right)\cos \lambda \mu ^{-}(x)
\end{equation}%
is the solution of (\ref{1}) when $q(x)\equiv 0,$%
\begin{equation}
\mu ^{+}(x)=\pm x\sqrt{\rho (x)}+a\left( 1\mp \sqrt{\rho (x)}\right). 
\end{equation}  

The characteristic function $\Delta(\lambda)$ of the problem (\ref{1}), (\ref{2}) is
\begin{equation*}
\Delta(\lambda):=<\varphi(x,\lambda),\psi(x,\lambda)>=\varphi(x,\lambda)\psi'(x,\lambda)-\varphi'(x,\lambda)\psi(x,\lambda)
\end{equation*} 
where $\Delta(\lambda)$ is independent from $x\in[0,\pi]$. Substituting $x=0$ and $x=\pi$ into above the equation, we get 
\begin{equation*}
\Delta(\lambda)=\varphi(\pi,\lambda)=\psi'(0,\lambda).
\end{equation*}

\begin{theorem}
For each fixed $x\in \lbrack 0,\pi ]$ the kernel $A(x,t)$ from the
representation (\ref{5})satisfies the following linear functional integral
equation 
\begin{equation}
\frac{2}{1+\sqrt{\rho (t)}}A\left( x,\mu ^{+}(t)\right) +\frac{1-\sqrt{\rho
(2a-t)}}{1+\sqrt{\rho (2a-t)}}A\left( x,2a-t\right) +  \notag
\end{equation}%
\begin{equation}
+F(x,t)+\int_{0}^{\mu ^{+}(x)}A(x,\xi )F_{0}(\xi ,t)d\xi =0,\qquad 0<t<x \label{6}
\end{equation}%
where%
\begin{equation}
F_{0}(x,t)=\sum_{n=1}^{\infty }\left( \frac{\varphi _{0}(t,\lambda _{n})\cos
\lambda _{n}x}{\alpha _{n}}-\frac{\varphi _{0}(t,\lambda _{n}^{0})\cos
\lambda _{n}^{0}x}{\alpha _{n}^{0}}\right)  \label{7}
\end{equation}%
\begin{equation}
F(x,t)=\frac{1}{2}\left( 1+\frac{1}{\sqrt{\rho (x)}}\right) F_{0}(\mu
^{+}(x),t)+\frac{1}{2}\left( 1-\frac{1}{\sqrt{\rho (x)}}\right) F_{0}(\mu
^{-}(x),t)  \label{8}
\end{equation}%
$\left\{ \lambda _{n}^{0}\right\} ^{2}$ are eigenvalues and $\alpha _{n}^{0}$
are norming constants of the boundary value problem (\ref{1}), (\ref{2})
when $q(x)\equiv 0.$
\end{theorem}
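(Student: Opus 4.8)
The plan is to carry out the Gelfand--Levitan construction, with the discontinuity of $\rho$ at $x=a$ absorbed via the identity $\mu^{+}(x)+\mu^{-}(x)=2a$ and a careful evaluation of the ``free'' cosine kernel across $x=a$.

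\smallskip
\emph{Step 1 (set-up).} I would use that, $q$ being real, $\{\lambda_n^2\}$ is the simple spectrum of the self-adjoint operator generated by (\ref{1})--(\ref{2}) in $H:=L_2\bigl((0,\pi);\rho\,dx\bigr)$, that $\{\varphi(\cdot,\lambda_n)/\sqrt{\alpha_n}\}$ and $\{\varphi_0(\cdot,\lambda_n^0)/\sqrt{\alpha_n^0}\}$ are orthonormal bases of $H$, and that the completeness relations $\sum_n\alpha_n^{-1}\varphi(x,\lambda_n)\varphi(y,\lambda_n)=\rho(y)^{-1}\delta(x-y)$ and its $q\equiv0$ analogue hold distributionally; all series below converge thanks to (\ref{4}). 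For a fixed $x\in(0,\pi]$ I introduce
\[
\Phi(x,t):=\sum_{n\ge1}\frac{\varphi(x,\lambda_n)\,\varphi_0(t,\lambda_n)}{\alpha_n}-\sum_{n\ge1}\frac{\varphi_0(x,\lambda_n^0)\,\varphi_0(t,\lambda_n^0)}{\alpha_n^0}.
\]

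\smallskip
\emph{Step 2 ($\Phi$ vanishes for $0<t<x$).} I pair $\Phi(x,\cdot)$ with $\phi\rho$ for $\phi\in C_0^\infty(0,x)$. The second sum contributes $\phi(x)=0$. In the first sum I substitute $\varphi_0(t,\lambda_n)=\varphi(t,\lambda_n)-\int_0^{\mu^{+}(t)}A(t,\xi)\cos\lambda_n\xi\,d\xi$ from (\ref{5}); the $\varphi$-part again gives $\phi(x)=0$, and the remainder equals $\sum_n\alpha_n^{-1}\varphi(x,\lambda_n)\widehat h(\lambda_n)$ with $\widehat h(\lambda)=\int_0^{\mu^{+}(x)}h(\xi)\cos\lambda\xi\,d\xi$, where, after interchanging the integrations, $h$ is supported in $(0,\mu^{+}(x))$ --- here one uses $\operatorname{supp}\phi\subset(0,x)$, monotonicity of $\mu^{+}$, and that $c^{-}(s):=\tfrac12\bigl(1-1/\sqrt{\rho(s)}\bigr)$ vanishes for $s<a$, so the $\mu^{-}$-branch never reaches past $x$. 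Inverting a Volterra equation built from (\ref{5}) and the changes of variable $\xi=\mu^{\pm}(s)$ (Jacobian $\sqrt{\rho(s)}$) then yields $\psi\in H$ with $\operatorname{supp}\psi\subset(0,x)$ and $\langle\varphi(\cdot,\lambda_n),\psi\rangle_\rho=\widehat h(\lambda_n)$, whence $\sum_n\alpha_n^{-1}\varphi(x,\lambda_n)\widehat h(\lambda_n)=\psi(x)=0$; thus $\Phi(x,t)=0$ for a.e. $t\in(0,x)$.

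\smallskip
\emph{Step 3 (computing $\Phi$ from (\ref{5})).} Substituting (\ref{5}) for $\varphi(x,\lambda_n)$ in the first sum defining $\Phi$ splits it into a ``$\varphi_0$-only'' bracket and $\int_0^{\mu^{+}(x)}A(x,\xi)\bigl(\sum_n\alpha_n^{-1}\cos\lambda_n\xi\,\varphi_0(t,\lambda_n)\bigr)\,d\xi$. With $\varphi_0(x,\lambda)=c^{+}(x)\cos\lambda\mu^{+}(x)+c^{-}(x)\cos\lambda\mu^{-}(x)$, $c^{\pm}(x)=\tfrac12(1\pm1/\sqrt{\rho(x)})$, the bracket is, directly by (\ref{7})--(\ref{8}), exactly $F(x,t)$; and $\sum_n\alpha_n^{-1}\cos\lambda_n\xi\,\varphi_0(t,\lambda_n)=F_0(\xi,t)+c^{+}(t)R_0(\xi,\mu^{+}(t))+c^{-}(t)R_0(\xi,\mu^{-}(t))$, where $R_0(\xi,\eta):=\sum_n(\alpha_n^0)^{-1}\cos\lambda_n^0\xi\cos\lambda_n^0\eta$. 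A separate computation --- by residue calculus for $\Delta_0(\lambda)^{-1}$, or by completeness in the unfolded cosine picture --- gives $R_0(\xi,\eta)=\delta(\xi-\eta)+\tfrac{1-\alpha}{1+\alpha}\delta(\xi+\eta-2a)$. Inserting this, using $\mu^{+}(t)+\mu^{-}(t)=2a$, and observing that the coefficients collapse to $c^{+}(t)+\tfrac{1-\alpha}{1+\alpha}c^{-}(t)=\tfrac{2}{1+\sqrt{\rho(t)}}$ in front of $A(x,\mu^{+}(t))$ and to $\tfrac{1-\alpha}{1+\alpha}c^{+}(t)+c^{-}(t)=\tfrac{1-\sqrt{\rho(2a-t)}}{1+\sqrt{\rho(2a-t)}}$ in front of $A(x,2a-t)$ (this last one being $0$ for $t>a$), I obtain
\[
\Phi(x,t)=F(x,t)+\frac{2}{1+\sqrt{\rho(t)}}A(x,\mu^{+}(t))+\frac{1-\sqrt{\rho(2a-t)}}{1+\sqrt{\rho(2a-t)}}A(x,2a-t)+\int_0^{\mu^{+}(x)}A(x,\xi)F_0(\xi,t)\,d\xi.
\]

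\smallskip
\emph{Step 4 (conclusion; the hard part).} Comparing Steps 2 and 3 yields (\ref{6}) on $0<t<x$. The routine work is the justification of convergence and of interchanging sums and integrals, all reducible to (\ref{4}). The genuinely delicate step is the evaluation of $R_0$ in Step 3: one must track on which side of $a$ the arguments $\xi,\eta$ lie, how the Jacobian $\sqrt{\rho}$ and the weights $c^{\pm}$ jump there, and how the reflection $\mu^{-}=2a-\mu^{+}$ folds the interval --- it is exactly this folding that produces the extra term $\tfrac{1-\sqrt{\rho(2a-t)}}{1+\sqrt{\rho(2a-t)}}A(x,2a-t)$, absent when $\rho\equiv1$, together with the nontrivial weight $\tfrac{2}{1+\sqrt{\rho(t)}}$ before $A(x,\mu^{+}(t))$. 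A secondary subtlety is the causality invoked in Step 2: the transformation operator in (\ref{5}), read through $\xi=\mu^{\pm}(s)$, must send functions supported in $(0,\mu^{+}(x))$ to functions supported in $(0,x)$, which again hinges on $c^{-}\equiv0$ on $(0,a)$ together with $\mu^{-}(s)<a<\mu^{+}(x)$.
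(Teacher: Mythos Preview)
The paper does not actually prove this theorem here: immediately after stating Theorems~1.1 and~1.2 it writes ``The proof of Theorem 1.1 and Theorem 1.2 is given in [Mam-Kar2].'' So there is no in-paper argument to compare against line by line.

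That said, your outline is the expected Gelfand--Levitan derivation and is almost certainly the route taken in the cited reference: form the ``difference of completeness kernels'' $\Phi(x,t)$, show it vanishes for $t<x$ by completeness, and then expand it using the representation~(\ref{5}) to read off the main equation. Your algebra in Step~3 is right; in particular the identities
\[
c^{+}(t)+\tfrac{1-\alpha}{1+\alpha}\,c^{-}(t)=\tfrac{2}{1+\sqrt{\rho(t)}},\qquad
\tfrac{1-\alpha}{1+\alpha}\,c^{+}(t)+c^{-}(t)=\tfrac{1-\sqrt{\rho(2a-t)}}{1+\sqrt{\rho(2a-t)}}
\]
check in both regimes $t\lessgtr a$, and for $t<a$ one has $\mu^{-}(t)=2a-t$, so the $A(x,\mu^{-}(t))$ term you obtain is exactly the $A(x,2a-t)$ of the statement (while for $t>a$ the coefficient vanishes, consistent with the theorem).

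Two places in your sketch carry real content that a full write-up must supply. First, in Step~2 the ``Volterra inversion'' producing $\psi$ supported in $(0,x)$ with $\langle\varphi(\cdot,\lambda),\psi\rangle_\rho=\widehat h(\lambda)$ is correct but not automatic: because the representation~(\ref{5}) is written against $\cos\lambda t$ rather than against $\varphi_0(t,\lambda)$, one has to pass through the change of variables $\xi=\mu^{\pm}(s)$ and use $c^{-}\equiv 0$ on $(0,a)$ together with $\mu^{-}((a,\pi))\subset(0,a)$ to see that the resulting integral operator is genuinely triangular (Volterra) in the $s$-variable on $(0,x)$. You flag this, but it deserves a lemma. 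Second, the formula
\[
R_0(\xi,\eta)=\sum_{n}\frac{\cos\lambda_n^0\xi\,\cos\lambda_n^0\eta}{\alpha_n^0}=\delta(\xi-\eta)+\tfrac{1-\alpha}{1+\alpha}\,\delta(\xi+\eta-2a)
\]
is the heart of the discontinuous case, and neither of your suggested proofs (contour integration against $\Delta_0^{-1}$, or ``unfolded cosine'' completeness) is entirely routine; a clean route is to verify it by pulling back to $(0,\pi)$ via $\xi=\mu^{+}(s)$ and using the completeness of $\{\varphi_0(\cdot,\lambda_n^0)\}$ in $L_{2,\rho}(0,\pi)$, which indeed reproduces both deltas with the stated reflection coefficient. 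With these two points filled in, your argument is complete.
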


\begin{theorem}
For each fixed $x\in \lbrack 0,\pi ]$ main equation (\ref{6}) has a unique
solution $A(x,.)\in L_{2,\rho }\left( 0,\mu ^{+}(x)\right) $.
\end{theorem}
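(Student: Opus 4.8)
I propose the following plan, following the classical Gel'fand--Levitan scheme adapted to the discontinuous weight.

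The plan is to read \eqref{6} as a Fredholm equation of the second kind in $L_{2,\rho}\left(0,\mu^{+}(x)\right)$ and to reduce its unique solvability to the injectivity of the associated homogeneous equation. Rewrite \eqref{6} as $B_{x}A(x,\cdot)+\mathcal{F}_{0,x}A(x,\cdot)=-F(x,\cdot)$, where $\mathcal{F}_{0,x}$ is the integral operator with kernel $F_{0}(\xi,t)$ and, for $0<t<x$,
\[
(B_{x}g)(t)=\frac{2}{1+\sqrt{\rho(t)}}\,g\!\left(\mu^{+}(t)\right)+\frac{1-\sqrt{\rho(2a-t)}}{1+\sqrt{\rho(2a-t)}}\,g(2a-t),
\]
with $g$ extended by zero outside $\left(0,\mu^{+}(x)\right)$. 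First I would check that $B_{x}$ is a bounded operator with bounded inverse from $L_{2,\rho}\left(0,\mu^{+}(x)\right)$ onto $L_{2}(0,x)$: since $\rho$ is bounded and bounded away from $0$, $t\mapsto\mu^{+}(t)$ is a piecewise-linear increasing bijection of $(0,x)$ onto $\left(0,\mu^{+}(x)\right)$, and $t\mapsto 2a-t$ is an involution, so $B_{x}g=h$ can be solved for $g$ explicitly by processing the two substitutions in the natural (triangular) order; for $x\le a$ one simply has $B_{x}=I$ and \eqref{6} is the classical Gel'fand--Levitan equation. Next, $\mathcal{F}_{0,x}$ is Hilbert--Schmidt: by the asymptotics \eqref{4} the sequences $\lambda_{n}-\lambda_{n}^{0}$ and $\alpha_{n}-\alpha_{n}^{0}$ lie in $\ell_{2}$, hence the $n$-th term of the series \eqref{7} has $L_{2}$-norm $O(1/n)$ and $F_{0}\in L_{2}$. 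Thus \eqref{6} is equivalent to $\bigl(I+B_{x}^{-1}\mathcal{F}_{0,x}\bigr)A(x,\cdot)=-B_{x}^{-1}F(x,\cdot)$ with $B_{x}^{-1}\mathcal{F}_{0,x}$ compact on $L_{2,\rho}\left(0,\mu^{+}(x)\right)$, and by the Fredholm alternative it is enough to prove that the homogeneous equation $B_{x}f+\mathcal{F}_{0,x}f=0$ has only the solution $f=0$.

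The heart of the argument is the intertwining identity
\[
\int_{0}^{x}\rho(t)\,(B_{x}g)(t)\,\varphi_{0}(t,\lambda)\,dt=\int_{0}^{\mu^{+}(x)}g(\xi)\cos\lambda\xi\,d\xi=:\widehat g(\lambda),
\]
valid for every $\lambda$ and every $g\in L_{2,\rho}\left(0,\mu^{+}(x)\right)$: it says that $B_{x}$ transports the cosine transform of $g$ into the $L_{2,\rho}$-coefficient of $B_{x}g$ against the eigenfunction $\varphi_{0}(\cdot,\lambda)$ of \eqref{1}, \eqref{2} with $q\equiv0$. I would verify it by a direct computation: insert the explicit expressions for $\varphi_{0}$ and $\mu^{\pm}$, split $(0,x)$ at $a$ (on $(0,a)$ one has $\varphi_{0}(t,\lambda)=\cos\lambda t$ and the leading factor equals $1$), substitute $s=\mu^{+}(t)$ on $(a,x)$ and $s=2a-t$ in the reflection term, and use $\mu^{+}(t)+\mu^{-}(t)=2a$; the coefficients $\tfrac{\alpha-1}{\alpha+1}$ and $\tfrac{1-\alpha}{1+\alpha}$ generated by the two terms cancel and only $\widehat g(\lambda)$ survives.

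Granting this, let $f$ solve the homogeneous equation, so $(B_{x}f)(t)=-\int_{0}^{\mu^{+}(x)}f(\xi)F_{0}(\xi,t)\,d\xi$ for $0<t<x$. Multiply by $\rho(t)\overline{(B_{x}f)(t)}$, integrate over $(0,x)$, and substitute the series \eqref{7} for $F_{0}$, so that the $\xi$-integration turns every summand into $\widehat f(\lambda_{n})$ or $\widehat f(\lambda_{n}^{0})$. Using the identity above to evaluate $\int_{0}^{x}\rho\,\overline{(B_{x}f)}\,\varphi_{0}(\cdot,\lambda_{n})$ as $\overline{\widehat f(\lambda_{n})}$ and, after extending $B_{x}f$ by zero to $(0,\pi)$, using Parseval's equality for the orthonormal basis $\{\varphi_{0}(\cdot,\lambda_{n}^{0})/\sqrt{\alpha_{n}^{0}}\}$ of $L_{2,\rho}(0,\pi)$ to identify $\int_{0}^{x}\rho\,|B_{x}f|^{2}$ with $\sum_{n}|\widehat f(\lambda_{n}^{0})|^{2}/\alpha_{n}^{0}$, all the $q\equiv0$ terms cancel and one is left with
\[
\sum_{n\ge1}\frac{|\widehat f(\lambda_{n})|^{2}}{\alpha_{n}}=0 .
\]
Since $\alpha_{n}>0$, this yields $\widehat f(\lambda_{n})=\int_{0}^{\mu^{+}(x)}f(\xi)\cos\lambda_{n}\xi\,d\xi=0$ for all $n\ge1$. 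But $\widehat f$ is entire of exponential type at most $\mu^{+}(x)$ and vanishes at every $\lambda_{n}$; by the density of $\{\lambda_{n}\}$ coming from \eqref{4} it vanishes identically, hence $f=0$ --- equivalently, $\{\cos\lambda_{n}\xi\}_{n\ge1}$ is complete in $L_{2}\left(0,\mu^{+}(x)\right)$, a fact established in \cite{Mam-Kar}. Together with the Fredholm alternative this proves both uniqueness and existence of the solution of \eqref{6}.

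I expect the main obstacle to be the intertwining identity of the second paragraph. Carrying out the change of variables together with the reflection $t\mapsto 2a-t$ needs a careful case analysis --- in particular $x\le a$ versus $x>a$, and the position of $\mu^{+}(x)$ relative to $2a$, where $2a-t$ may leave $[0,\pi]$ and one must use the even extension of $A(x,\cdot)$ implicit in \eqref{5} --- together with the verification that exactly the $q\equiv0$ contribution cancels so that only $\widehat g(\lambda)$ remains; the bounded invertibility of $B_{x}$ rests on the same bookkeeping. The remaining ingredients --- the Hilbert--Schmidt bound from \eqref{4}, the Parseval equality and the completeness of $\{\cos\lambda_{n}\xi\}$, and the Fredholm alternative --- are routine given \eqref{4} and the spectral theory of \eqref{1}, \eqref{2} developed in \cite{Mam-Kar}.
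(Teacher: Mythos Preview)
The paper does not prove Theorem 1.2 here; it simply cites \cite{Mam-Kar2} for the proof. Your proposal is the natural adaptation of the classical Gel'fand--Levitan argument (Fredholm alternative plus a Parseval identity to kill the homogeneous equation), and this is precisely the method used in \cite{Mam-Kar2} and in the standard references \cite{Fre,Yur,Lev} for the continuous case. Your key intertwining identity
\[
\int_{0}^{x}\rho(t)\,(B_{x}g)(t)\,\varphi_{0}(t,\lambda)\,dt=\int_{0}^{\mu^{+}(x)}g(\xi)\cos\lambda\xi\,d\xi
\]
is correct and checks out by the direct computation you indicate: on $(a,x)$ the substitution $s=\mu^{+}(t)$ produces $\int_{a}^{\mu^{+}(x)}g(s)\cos\lambda s\,ds$ plus a residual term with coefficient $\tfrac{\alpha-1}{1+\alpha}$, while on $(0,a)$ the reflection $s=2a-t$ in the second summand of $B_{x}g$ produces the same residual with coefficient $\tfrac{1-\alpha}{1+\alpha}$; these cancel and only $\widehat g(\lambda)$ survives. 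The remaining steps --- Hilbert--Schmidt bound on $F_0$ from \eqref{4}, Parseval for the unperturbed basis $\{\varphi_0(\cdot,\lambda_n^0)\}$, and the completeness of $\{\cos\lambda_n\xi\}$ in $L_2\bigl(0,\mu^{+}(x)\bigr)$ via the density of the $\lambda_n$ --- are indeed routine. Your anticipated case analysis (the positions of $x$, $2a$, $\mu^{+}(x)$ relative to each other and to $\pi$) is genuine bookkeeping but introduces no new difficulty, since the reflection term vanishes whenever the argument falls outside the support of $g$ or whenever $\rho(2a-t)=1$.
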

The proof of Theorem 1.1 and Theorem 1.2 is given in \cite{Mam-Kar2}.

\section{Sufficient conditions for solvability of the inverse problem}

Assume that the real numbers $\left\{ \lambda _{n}^{2},\alpha _{n}\right\}_{n\geq 1}$ is given by the formula (\ref{4}). 
Now, let's construct $ F_0(x,t)$ and $F(x,t)$ functions by using the formulas (\ref{7}), (\ref{8}) and write the integral equation (\ref{6}).

We determine $A(x,t)$ from the main equation (\ref{6}). We shall construct the function $\varphi(x,\lambda)$ with the formula (\ref{5}) i.e.
 \begin{equation*}
\varphi (x,\lambda ):=\varphi _{0}(x,\lambda )+\int_{0}^{\mu
^{+}(x)}A(x,t)\cos \lambda tdt,
\end{equation*}
and the function $q(x)$ with formula 
\begin{equation}
q(x):=\frac{4\rho (x)}{\sqrt{\rho (x)}+1}\frac{d}{dx}A\left( x,\mu
^{+}(x)\right) .  \label{13}
\end{equation}
Denote 
\[
b(x):=\sum_{n=1}^{\infty }\left( \frac{\cos \lambda _{n}x}{\alpha _{n}\lambda^2
_{n}}-\frac{\cos \lambda _{n}^{0}x}{\alpha _{n}^{0}{\lambda _{n}^{0}}^{2}}\right) .
\]%
Similar to Lemma 1.3.4 in \cite{Yur}, it is shown that $b(x)\in W^1_2(0,\pi)$. 
According to (\ref{4}) and (\ref{5}) we have
\begin{equation}
F_{0_{tt}}(x,t)=\rho (t)F_{0_{xx}}(x,t),\
\ \rho (t)F_{xx}(x,t)=\rho (x)F_{tt}(x,t),
\label{14}
\end{equation}%
\begin{equation}
\left. F_{0}(x,t)\right\vert _{x=0}=0,\qquad\left.
F_{0}(x,t)\right\vert _{t=0}=0,  \label{15}
\end{equation}%
\begin{equation}
\frac{\partial }{\partial x}F_{0}(\mu ^{\pm }(x),t)=\pm \sqrt{\rho \left(
x\right) }\frac{\partial }{\partial \xi }\left. F_{0}\left( \xi ,t\right)
\right\vert _{\xi =\mu ^{\pm }(x)}.  \label{16}
\end{equation}%

Using the main equation (\ref{6}) it can be proved that 
\begin{equation}
A(x,0)=0,  \label{17}
\end{equation}%
\begin{equation}
\frac{\sqrt{\rho (x)}-1}{\sqrt{\rho (x)}+1}\frac{d}{dx}A(x,\mu ^{+}(x))=%
\frac{d}{dx}\left\{ A(x,\mu ^{-}(x)+0)-A(x,\mu ^{-}(x)-0\right\} .
\label{18}
\end{equation}

\subsection{Derivation of the Differential Equation}

\begin{lemma}
The following relations hold
\begin{equation}
-\varphi ^{\prime \prime }(x,\lambda )+q(x)\varphi (x,\lambda )=\lambda
^{2}\rho (x)\varphi (x,\lambda ),  \label{19}
\end{equation}%
\begin{equation}
\varphi (0,\lambda )=1,\text{ \ }\varphi ^{\prime }(0,\lambda )=0.  \label{20}
\end{equation}
\end{lemma}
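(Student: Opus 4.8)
The plan is to verify that the function $\varphi(x,\lambda)$ constructed from the solution $A(x,t)$ of the main equation \eqref{6} satisfies the differential equation \eqref{19} together with the initial conditions \eqref{20}. First I would dispose of the initial conditions: from the representation \eqref{5} we have $\varphi(0,\lambda)=\varphi_0(0,\lambda)+\int_0^{\mu^+(0)}A(0,t)\cos\lambda t\,dt$, and since $\mu^+(0)=a(1-1)=0$ wait, more carefully $\mu^+(0)=0$ only in the appropriate branch, so the integral is over an empty interval (or vanishes by \eqref{17}), giving $\varphi(0,\lambda)=\varphi_0(0,\lambda)=1$; differentiating \eqref{5} in $x$ and using $A(x,0)=0$ from \eqref{17} together with $\varphi_0'(0,\lambda)=0$ gives $\varphi'(0,\lambda)=0$. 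The reduction \eqref{17} and the jump condition \eqref{18}, both stated as consequences of the main equation, are exactly the ingredients that make this bookkeeping work.

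The substantive part is \eqref{19}. The strategy is to differentiate the main equation \eqref{6} twice — once with respect to $x$ and once with respect to $t$ — and use the relations \eqref{14}, \eqref{15}, \eqref{16} for $F_0$ and $F$, which encode that these kernels propagate along the characteristics of the operator $\partial_{tt}-\rho\,\partial_{xx}$ (in the respective variables). Concretely, I would compute $\lambda^2\rho(x)\varphi(x,\lambda)-(-\varphi''(x,\lambda)+q(x)\varphi(x,\lambda))$ by inserting \eqref{5}, integrating the $\cos\lambda t$ terms by parts twice to convert $\lambda^2\cos\lambda t$ into $-\partial_{tt}\cos\lambda t$, and moving the $t$-derivatives onto $A(x,t)$ via integration by parts. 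The boundary terms from these integrations by parts at $t=0$ and $t=\mu^+(x)$ must be controlled using $A(x,0)=0$, the formula \eqref{13} for $q(x)$ in terms of $\tfrac{d}{dx}A(x,\mu^+(x))$, and the jump relation \eqref{18} at the interior point $t=\mu^-(x)$ where $\rho$ (and hence the kernel) is only piecewise smooth. One then shows that the resulting expression, call it $\Phi(x,\lambda)$, is itself representable as $\int_0^{\mu^+(x)}(\text{something})\cos\lambda t\,dt$ and, after invoking the main equation \eqref{6} differentiated appropriately, that this "something" satisfies the \emph{same} homogeneous main equation as $A$; by the uniqueness in Theorem 1.2 it must vanish, so $\Phi(x,\lambda)\equiv 0$, which is \eqref{19}.

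The main obstacle will be the careful handling of the discontinuity of $\rho$ at $x=a$ — equivalently the kink of $\mu^\pm$ and the splitting of the integral at $t=\mu^-(x)$. The kernel $A(x,\cdot)$ has a jump in its derivative there, the term $\frac{1-\sqrt{\rho(2a-t)}}{1+\sqrt{\rho(2a-t)}}A(x,2a-t)$ in \eqref{6} reflects exactly this, and all the boundary contributions from integration by parts at $t=\mu^-(x)$ have to cancel precisely by \eqref{18}. Keeping the two branches $\mu^+$ and $\mu^-$ straight while differentiating, and using \eqref{16} to trade $\partial_x$ for $\pm\sqrt{\rho(x)}\,\partial_\xi$ at the right places, is where the computation is delicate; everything else is routine differentiation under the integral sign and integration by parts. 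The piecewise relations in \eqref{14} for $F$ (as opposed to $F_0$) are what guarantee the cancellation goes through on both sides of $x=a$.
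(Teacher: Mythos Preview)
Your proposal is correct and follows essentially the same route as the paper: differentiate the main equation to show that $A_{xx}-\rho(x)A_{tt}-q(x)A$ (your ``something'') satisfies the homogeneous main equation and hence vanishes by Theorem~1.2, then insert \eqref{5} into $-\varphi''+q\varphi-\lambda^2\rho\varphi$, integrate by parts twice, and cancel the boundary terms via \eqref{13}, \eqref{17}, \eqref{18}. The only point you omit is a regularity caveat the paper makes explicit: the twice-differentiation of the main equation is first carried out under the stronger hypothesis $b(x)\in W_2^2(0,\pi)$, and the general case $b(x)\in W_2^1(0,\pi)$ is then obtained by a standard approximation argument.
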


\begin{proof}
Assume that $b(x)\in W_{2}^{2}(0,\pi )$ and
\[
J(x,\lambda ):=\frac{2}{1+\sqrt{\rho (t)}}A\left( x,\mu ^{+}(t)\right) +\frac{%
1-\sqrt{\rho (2a-t)}}{1+\sqrt{\rho (2a-t)}}A\left( x,2a-t\right) +
\] 
\begin{equation}
+F(x,t)+\int_{0}^{\mu ^{+}(x)}A(x,\xi )F_{0}(\xi ,t)d\xi =0, \label{21}  
\end{equation}
Differentiating (\ref{21}) twice with respect to $x$ and $t$ we get 
\[
J_{xx}^{\prime \prime }(x,t)-\rho (x)J_{tt}^{\prime \prime
}(x,t)-q(x)J(x,\lambda )\equiv 0.
\]%

Using the formulas (\ref{6}), (\ref{13})-(\ref{16}) and (\ref{18}), we obtain the following homogeneous equation
\[
\frac{2}{1+\sqrt{\rho (t)}}\left[A_{xx}\left( x,\mu
^{+}(t)\right)-\rho (x)A_{tt}\left( x,\mu
^{+}(t)\right)-q(x)A\left( x,\mu ^{+}(t)\right)\right]+
\]
\[+\frac{1-\sqrt{\rho (2a-t)}}{1+\sqrt{\rho (2a-t)}}\left[A_{xx}\left( x,2a-t\right)-\rho (x)A_{tt}\left( x,2a-t\right)-q(x)A\left( x,2a-t\right)\right]+
\]
\[+\int_{0}^{\mu ^{+}(x)}\left[ A_{xx}(x,\xi )-\rho (x)A_{\xi
\xi }(x,\xi )-q(x)A(x,\xi )\right] F_{0}(\xi ,t)d\xi =0.
\]
We know that from \cite{Mam-Kar2} this equation has only trivial solution:
\begin{equation}
A_{xx}(x,t)-\rho (x)A_{tt}
(x,t)-q(x)A(x,t)=0,\quad 0<t<x.  \label{23}
\end{equation}%

Differentiating (\ref{5}) twice, integrating by parts twice and using (\ref{17}) we obtain
\[
\varphi''(x,\lambda )+\lambda ^{2}\rho (x)\varphi (x,\lambda
)-q(x)\varphi (x,\lambda )=\varphi _{0}''(x,\lambda
)+\int_{0}^{\mu ^{+}(x)}A_{xx}(x,t)\cos \lambda tdt+
\]%
\[
-\lambda \rho (x)A(x,\mu ^{+}(x))\sin \lambda \mu ^{+}(x)+\sqrt{\rho (x)}%
A_{x}(x,\mu ^{+}(x))\cos \lambda \mu ^{+}(x)+
\]%
\[
+\lambda \rho (x)\sin \lambda \mu ^{-}(x)\left( A\left( x,\mu
^{-}(x)+0\right) -A\left( x,\mu ^{-}(x)-0\right) \right) +
\]%
\[
+\sqrt{\rho (x)}\cos \lambda \mu ^{-}(x)\frac{d}{dx}\left( A\left( x,\mu
^{-}(x)+0\right) -A\left( x,\mu ^{-}(x)-0\right) \right) +
\]%
\[
+\sqrt{\rho (x)}\cos \lambda \mu ^{+}(x)\left. \frac{\partial A(x,t)}{%
\partial x}\right\vert _{t=\mu ^{+}(x)}+
\]%
\[
+\sqrt{\rho (x)}\cos \lambda \mu ^{-}(x)\left( \left. \frac{\partial A(x,t)}{%
\partial x}\right\vert _{t=\mu ^{-}(x)+0}-\left. \frac{\partial A(x,t)}{%
\partial x}\right\vert _{t=\mu ^{-}(x)-0}\right) -
\]%
\[
-\varphi _{0}^{\prime \prime }(x,\lambda )+\lambda \rho (x)\sin \lambda \mu
^{+}(x)A(x,\mu ^{+}(x))+\rho (x)\cos \lambda \mu ^{+}(x)\left. \frac{%
\partial A(x,t)}{\partial t}\right\vert _{t=\mu ^{+}(x)}-
\]%
\[
-\lambda \rho (x)\sin \lambda \mu ^{-}(x)\left\{ A\left( x,\mu
^{-}(x)+0\right) -A\left( x,\mu ^{-}(x)-0\right) \right\} +
\]%
\[
+\rho (x)\cos \lambda \mu ^{-}(x)\left[ \left. \frac{\partial A(x,t)}{%
\partial t}\right\vert _{t=\mu ^{-}(x)-0}-\left. \frac{\partial A(x,t)}{%
\partial t}\right\vert _{t=\mu ^{-}(x)+0}\right] -
\]%
\[
-\rho (x)\int_{0}^{\mu ^{+}(x)}A_{tt}^{\prime \prime }(x,t)\cos \lambda tdt-
\]%
\[
-q(x)\left[ \frac{1}{2}\left( 1+\frac{1}{\sqrt{\rho (x)}}\right) \cos
\lambda \mu ^{+}(x)+\right. 
\]%
\[
+\left. \frac{1}{2}\left( 1-\frac{1}{\sqrt{\rho (x)}}\right) \cos \lambda
\mu ^{-}(x)+\int_{0}^{\mu ^{+}(x)}A(x,t)\cos \lambda tdt\right] .
\]%
Hence using (\ref{13}), (\ref{18}) and (\ref{23}) we arrive at (\ref{19}). The relations (\ref{20}) follow from (\ref{5}) for $x=0$. Lemma 2.1 is proved
in the case $b(x)\in W_{2}^{2}(0,\pi ).$

The proof of Lemma 2.1 in the case $b(x)\in W_{2}^{1}(0,\pi )$ is carried out
by a standard method (see e.g. \cite{Fre} p. 40).
\end{proof}
As in the theory of Sturm-Liouville problems (see \cite{Yur}, Lemma 1.5.8 and Corollary 1.5.1) the following lemmas can be proved.

\begin{lemma}
For each function $g(x)\in L_{2,\rho }(0,\pi )$,
\begin{equation}
\int_{0}^{\pi }\rho (x)g^{2}(x)dx=\sum_{n=1}^{\infty }\frac{1}{\alpha _{n}}%
\left( \int_{0}^{\pi }\rho (t)g(t)\varphi (t,\lambda _{n})dt\right) ^{2}.
\label{24}
\end{equation}
\end{lemma}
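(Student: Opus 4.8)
The plan is to establish \eqref{24} by first proving it on a dense subspace and then extending by continuity, exactly as in the classical Sturm–Liouville situation referenced (Yurko, Lemma 1.5.8). The natural dense set is $W_2^2(0,\pi)$ (or $C_0^\infty$ together with appropriate boundary adjustments) inside $L_{2,\rho}(0,\pi)$; for such smooth $g$ one may integrate by parts freely. First I would recall that $\{\varphi(x,\lambda_n)\}_{n\ge 1}$, built from the solution constructed in Lemma 2.1 satisfying \eqref{19}–\eqref{20}, together with the boundary condition at $x=\pi$, forms an orthogonal system in $L_{2,\rho}(0,\pi)$ with $\int_0^\pi \rho(x)\varphi(x,\lambda_n)\varphi(x,\lambda_m)\,dx = 0$ for $n\ne m$, and that $\alpha_n = \int_0^\pi \rho(x)\varphi^2(x,\lambda_n)\,dx$ is exactly the norming constant appearing in \eqref{4}–\eqref{7}. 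Thus \eqref{24} is precisely Bessel's identity once completeness is known; so the crux is the \emph{completeness} of $\{\varphi(\cdot,\lambda_n)\}$ in $L_{2,\rho}(0,\pi)$.

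The key steps, in order, are: (i) define the Fourier coefficients $c_n(g) = \frac{1}{\alpha_n}\int_0^\pi \rho(t)g(t)\varphi(t,\lambda_n)\,dt$ and the partial sums $S_N g = \sum_{n=1}^N c_n(g)\varphi(\cdot,\lambda_n)$; (ii) show $\|g - S_N g\|_{L_{2,\rho}}^2 = \|g\|_{L_{2,\rho}}^2 - \sum_{n=1}^N \alpha_n c_n(g)^2$, so that \eqref{24} is equivalent to $S_N g \to g$; (iii) prove the convergence for $g \in W_2^2(0,\pi)$ with $g'(0)=0$, $g(\pi)=0$ by writing $c_n(g)$ via two integrations by parts against $-\varphi'' + q\varphi = \lambda_n^2\rho\varphi$, which gains a factor $\lambda_n^{-2}$ and, combined with the asymptotics \eqref{4} for $\lambda_n$ and the uniform bound $|\varphi(x,\lambda_n)| \le C$ coming from \eqref{5} and the Riemann–Lebesgue behaviour of the $\cos$-kernel term, yields an absolutely and uniformly convergent series whose sum can be identified with $g$ via a contour-integral (resolvent) argument over expanding circles, using the characteristic function $\Delta(\lambda)=\varphi(\pi,\lambda)$ and its zeros $\lambda_n$; (iv) extend from this dense subspace to all of $L_{2,\rho}(0,\pi)$ by the standard density-plus-Bessel argument. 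Since the discontinuous weight $\rho$ only changes the relevant ``clock'' $\mu^\pm(x)$ and the explicit model functions $\varphi_0,\Delta_0$, but not the structure of the contour-integral/residue computation, each step carries over with only notational changes from the $\rho\equiv 1$ case; in fact the completeness here can alternatively be read off directly from Theorem 1.2, since the unique solvability of the main equation \eqref{6} for every $x$ is itself equivalent to the system $\{\varphi(\cdot,\lambda_n)\}$ being complete.

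The main obstacle will be step (iii): rigorously justifying the contour-integral representation $g(x) = -\frac{1}{2\pi i}\oint_{|\lambda|=R_N} (\text{Green's function}) \cdot (\lambda\,\text{-dependent data})\,d\lambda$ and showing the integral over the expanding contours $|\lambda| = R_N$ tends to zero. This requires sharp estimates on the Green's function of \eqref{1}–\eqref{2} away from the spectrum — in particular a lower bound $|\Delta(\lambda)| \ge C\, e^{|\mathrm{Im}\,\lambda|\,\mu^+(\pi)}/\langle\lambda\rangle$ along the chosen circles, which follows from the asymptotics of $\Delta_0$ and of $\Delta - \Delta_0$, together with the transmission-type compatibility \eqref{18} at the discontinuity so that no spurious growth is introduced at $x=a$. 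Once that estimate is in place the remainder of the argument is routine, and \eqref{24} follows. I would remark that this is precisely the point where the reference to \cite{Yur} (Lemma 1.5.8, Corollary 1.5.1) is being invoked, so in the paper itself the proof can legitimately be compressed to ``proved as in the standard theory,'' with the discontinuity handled by the substitutions $x \mapsto \mu^\pm(x)$ and the compatibility conditions \eqref{15}--\eqref{18}.
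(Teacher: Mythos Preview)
Your main line of argument is circular relative to the paper's logical order. You invoke the orthogonality of $\{\varphi(\cdot,\lambda_n)\}$ and the boundary condition $\varphi(\pi,\lambda_n)=0$, and your contour-integral step (iii) needs the $\lambda_n$ to be precisely the zeros of $\Delta(\lambda)=\varphi(\pi,\lambda)$. But in the paper these facts are Lemma~2.3 and Lemma~2.4, both proved \emph{after} the present lemma and \emph{using} it. At this stage $q(x)$ has just been defined by \eqref{13}; one does not yet know that the boundary value problem \eqref{1}--\eqref{2} with this $q$ has eigenvalues $\lambda_n$ and norming constants $\alpha_n$. A resolvent/contour argument for that problem would yield Parseval with respect to its \emph{actual} spectral data, not with respect to the prescribed $\{\lambda_n,\alpha_n\}$, so it proves the wrong identity.

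The paper's route (the Yurko Lemma~1.5.8 it cites) avoids this by proving \eqref{24} directly from the transformation operator \eqref{5} and the main equation \eqref{6}: one writes $\int_0^\pi \rho g\varphi(\cdot,\lambda_n)$ in terms of $\int_0^\pi \rho g\varphi_0(\cdot,\lambda_n)$ via the kernel $A(x,t)$, invokes the already-known Parseval identity for the unperturbed system $\{\varphi_0(\cdot,\lambda_n^0),\alpha_n^0\}$, and uses the main equation to pass from the data $\{\lambda_n^0,\alpha_n^0\}$ to $\{\lambda_n,\alpha_n\}$. No knowledge of $\varphi(\pi,\lambda_n)$ or of orthogonality is required. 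Your closing remark---that unique solvability of \eqref{6} (Theorem~1.2) is what really drives completeness---is exactly the right intuition and is in fact the heart of the intended proof; that should be the primary argument, not an afterthought.
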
 

\begin{cor}
For arbitrary functions $f(x),$ $g(x)\in L_{2,\rho }(0,\pi )$, 
\begin{equation}
\int_{0}^{\pi }\rho (x)f(x)g(x)dx=\sum_{n=1}^{\infty }\frac{1}{\alpha _{n}}%
\int_{0}^{\pi }\rho (t)f(t)\varphi (t,\lambda _{n})dt\int_{0}^{\pi }\rho
(t)g(t)\varphi (t,\lambda _{n})dt.  \label{25}
\end{equation}%
\end{cor}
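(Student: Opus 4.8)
The plan is to obtain the bilinear identity (\ref{25}) from the quadratic Parseval-type identity (\ref{24}) of Lemma 2.2 by polarization. Fix real-valued $f,g\in L_{2,\rho}(0,\pi)$; then $f+g$ and $f-g$ also lie in $L_{2,\rho}(0,\pi)$, so Lemma 2.2 applies to each of them. Set
\[
a_{n}:=\int_{0}^{\pi }\rho (t)f(t)\varphi (t,\lambda _{n})\,dt,\qquad
b_{n}:=\int_{0}^{\pi }\rho (t)g(t)\varphi (t,\lambda _{n})\,dt .
\]
Applying (\ref{24}) to $f+g$ and to $f-g$ gives
\[
\int_{0}^{\pi }\rho (x)\bigl(f(x)+g(x)\bigr)^{2}dx=\sum_{n=1}^{\infty }\frac{(a_{n}+b_{n})^{2}}{\alpha _{n}},\qquad
\int_{0}^{\pi }\rho (x)\bigl(f(x)-g(x)\bigr)^{2}dx=\sum_{n=1}^{\infty }\frac{(a_{n}-b_{n})^{2}}{\alpha _{n}}.
\]
Subtracting these two identities and using $(f+g)^{2}-(f-g)^{2}=4fg$ on the left and $(a_{n}+b_{n})^{2}-(a_{n}-b_{n})^{2}=4a_{n}b_{n}$ on the right yields $4\int_{0}^{\pi }\rho (x)f(x)g(x)\,dx=4\sum_{n=1}^{\infty }a_{n}b_{n}/\alpha _{n}$, which is exactly (\ref{25}).

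The only point requiring care is the legitimacy of the termwise subtraction of the two series. Since the norming constants satisfy $\alpha _{n}>0$ and, by Lemma 2.2, the nonnegative series $\sum_{n}(a_{n}\pm b_{n})^{2}/\alpha _{n}$ converge (to the finite integrals above), the sequences $\{(a_{n}+b_{n})/\sqrt{\alpha _{n}}\}$ and $\{(a_{n}-b_{n})/\sqrt{\alpha _{n}}\}$ belong to $l_{2}$, hence so do $\{a_{n}/\sqrt{\alpha _{n}}\}$ and $\{b_{n}/\sqrt{\alpha _{n}}\}$. By the Cauchy--Schwarz inequality for series,
\[
\sum_{n=1}^{\infty }\frac{|a_{n}b_{n}|}{\alpha _{n}}\le \Bigl(\sum_{n=1}^{\infty }\frac{a_{n}^{2}}{\alpha _{n}}\Bigr)^{1/2}\Bigl(\sum_{n=1}^{\infty }\frac{b_{n}^{2}}{\alpha _{n}}\Bigr)^{1/2}<\infty ,
\]
so the series on the right-hand side of (\ref{25}) converges absolutely and all the rearrangements above are justified.

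I do not anticipate any real obstacle: the corollary is a formal consequence of Lemma 2.2, in complete analogy with the classical Sturm--Liouville case (\cite{Yur}, Corollary 1.5.1). If one prefers to avoid polarization, one may equally well apply (\ref{24}) to $f$, to $g$, and to $f+g$ separately and combine the three resulting identities; the computation has the same length.
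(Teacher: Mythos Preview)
Your argument is correct: the corollary follows from Lemma~2.2 by the standard polarization identity, and your justification of the absolute convergence via Cauchy--Schwarz is exactly what is needed. The paper does not write out its own proof here but simply refers to the classical argument (\cite{Yur}, Corollary~1.5.1), which is precisely the polarization you carry out, so your approach coincides with the intended one.
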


Using the below lemmas the following lemma is proved with standard method.
\begin{lemma}
The following relation holds  
\begin{equation}
\int_{0}^{\pi }\rho (x)\varphi (t,\lambda _{n})\varphi (t,\lambda
_{k})dt=\left\{ 
\begin{array}{c}
0,\text{ \ \ \ }n\neq k \\ 
\alpha _{n},\text{ \ \ }n=k.
\end{array}%
\right.   \label{26}
\end{equation}%
\end{lemma}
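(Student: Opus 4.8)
I would follow the classical Gel'fand--Levitan argument (cf.\ \cite{Yur}, Lemma~1.5.8 and Corollary~1.5.1): the off-diagonal part of (\ref{26}) will come from Green's formula together with the boundary data, and the diagonal part from the Parseval identity (\ref{24}). The two ingredients are the relations (\ref{19})--(\ref{20}) of Lemma~2.1 and the fact that every $\lambda_n^{2}$ is an eigenvalue of the reconstructed problem, i.e.\ $\varphi(\pi,\lambda_n)=0$ for all $n$.

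\medskip

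\emph{Case $n\ne k$.} By Lemma~2.1 each $\varphi(\cdot,\lambda_m)$ solves $-y''+q(x)y=\lambda_m^{2}\rho(x)y$; it is continuous on $[0,\pi]$ together with its derivative and satisfies the equation classically on $(0,a)$ and on $(a,\pi)$. Multiplying the equation for $\varphi(x,\lambda_n)$ by $\varphi(x,\lambda_k)$, subtracting the symmetric expression, integrating over $(0,\pi)$ and integrating by parts on the two subintervals (the Wronskian is continuous at $x=a$, so nothing survives there), one arrives at
\[
(\lambda_k^{2}-\lambda_n^{2})\int_0^{\pi}\rho(x)\,\varphi(x,\lambda_n)\varphi(x,\lambda_k)\,dx
=\Big[\varphi'(x,\lambda_n)\varphi(x,\lambda_k)-\varphi(x,\lambda_n)\varphi'(x,\lambda_k)\Big]_0^{\pi}.
\]
The bracket vanishes at $x=0$ by (\ref{20}) and at $x=\pi$ because $\varphi(\pi,\lambda_n)=\varphi(\pi,\lambda_k)=0$; since $\lambda_n^{2}\ne\lambda_k^{2}$ the integral is $0$.

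\medskip

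\emph{Case $n=k$.} Put $a_{nk}:=\int_0^{\pi}\rho(t)\varphi(t,\lambda_n)\varphi(t,\lambda_k)\,dt$, so $a_{nk}=0$ for $n\ne k$ by the previous step. Applying (\ref{24}) to $g:=\varphi(\cdot,\lambda_k)\in L_{2,\rho}(0,\pi)$, whose coefficient $\frac{1}{\alpha_n}\int_0^{\pi}\rho(t)g(t)\varphi(t,\lambda_n)\,dt=\frac{a_{nk}}{\alpha_n}$ vanishes unless $n=k$, the series reduces to one term and yields $a_{kk}=\frac{1}{\alpha_k}a_{kk}^{2}$. Since $\varphi(0,\lambda_k)=1$ the element $g$ is nonzero, hence $a_{kk}>0$, and therefore $a_{kk}=\alpha_k$, which is the second line of (\ref{26}).

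\medskip

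I expect the real work to lie in the equality $\varphi(\pi,\lambda_n)=0$, i.e.\ in showing that the reconstructed characteristic function $\Delta(\lambda)=\varphi(\pi,\lambda)$ vanishes at the prescribed points $\lambda_n$. The plan there is to let $x\to\pi$ in the main equation (\ref{6}), substitute the representation (\ref{5}) and the explicit forms (\ref{7})--(\ref{8}) of $F_0$ and $F$, and use that the unperturbed terms $\frac{\varphi_0(t,\lambda_n^{0})\cos\lambda_n^{0}x}{\alpha_n^{0}}$ are built around the identity $\varphi_0(\pi,\lambda_n^{0})=\Delta_0(\lambda_n^{0})=0$; together with the uniqueness of the solution of (\ref{6}) (Theorem~1.2) this should force the corresponding $\lambda_n$-terms to vanish at $x=\pi$, i.e.\ $\varphi(\pi,\lambda_n)=0$. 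This is ``standard'' in the sense of \cite{Yur}, but it needs extra care here because of the discontinuity of $\rho$ and the jump of $A(x,\cdot)$ at $\mu^{-}(x)$. Once it is available the two computations above are routine, and one obtains, in passing, that $\{\lambda_n^{2},\alpha_n\}$ is precisely the spectral data of the reconstructed operator.
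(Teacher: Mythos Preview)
Your off-diagonal argument relies on $\varphi(\pi,\lambda_n)=0$, which is exactly Lemma~2.4. In the paper, however, Lemma~2.4 comes \emph{after} the present lemma and its proof explicitly invokes (\ref{26}) (the step ``By (\ref{26}) we get (\ref{28})''). Using the boundary condition at $x=\pi$ here therefore makes the argument circular within the paper's logical order, and your proposed fix --- establishing $\varphi(\pi,\lambda_n)=0$ directly from the main equation (\ref{6}) before touching (\ref{26}) --- remains only a heuristic sketch, not the argument referenced from \cite{Yur}.

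The paper's route is the reverse of yours: relation (\ref{26}) is obtained from the Parseval identity (\ref{24})--(\ref{25}) alone, with no information about the endpoint $x=\pi$. Applying (\ref{25}) with $g=\varphi(\cdot,\lambda_k)$ and arbitrary $f$ gives $\varphi(\cdot,\lambda_k)=\sum_n\frac{a_{nk}}{\alpha_n}\varphi(\cdot,\lambda_n)$ in $L_{2,\rho}(0,\pi)$; since $\{\varphi(\cdot,\lambda_n)\}$ is a Riesz basis (a consequence of (\ref{4}) and (\ref{5}), and part of the ``standard method'' in \cite{Yur}), expansion coefficients are unique, whence $a_{nk}/\alpha_n=\delta_{nk}$, i.e.\ both cases of (\ref{26}) at once. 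Only afterwards is (\ref{26}) combined with Green's formula to deduce $\varphi(\pi,\lambda_n)=0$ in Lemma~2.4. Your diagonal computation via (\ref{24}) is correct once the off-diagonal vanishing is in hand, but note that the paper's approach does not need this case split at all.
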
 

\subsection{Derivation of Boundary Condition}

\begin{lemma}
For all $n\geq 1$ the equality 
\[
\varphi (\pi ,\lambda _{n})=0
\]%
holds.
\end{lemma}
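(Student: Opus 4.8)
The plan is to prove $\varphi(\pi,\lambda_n)=0$ in two stages: first, using the orthogonality relation, force all the pairs $\big(\varphi(\pi,\lambda_n),\varphi'(\pi,\lambda_n)\big)$ onto one line through the origin; then, using the asymptotics (\ref{4}) and the explicit form of $\varphi_0$, identify that line as $\{\varphi(\pi,\cdot)=0\}$.

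\emph{Step 1.} Fix $n\ne k$. By Lemma 2.1 both $\varphi(x,\lambda_n)$ and $\varphi(x,\lambda_k)$ satisfy (\ref{19}) with the initial data (\ref{20}). Lagrange's identity (multiply the equation for $\varphi(x,\lambda_n)$ by $\varphi(x,\lambda_k)$, subtract the symmetric product, and integrate over $[0,\pi]$) yields
\[
(\lambda_n^2-\lambda_k^2)\int_0^{\pi}\rho(t)\varphi(t,\lambda_n)\varphi(t,\lambda_k)\,dt=\left[\varphi(t,\lambda_n)\varphi'(t,\lambda_k)-\varphi'(t,\lambda_n)\varphi(t,\lambda_k)\right]_0^{\pi}.
\]
By (\ref{20}) the right-hand side vanishes at $t=0$, and by Lemma 2.3 the left-hand side vanishes; hence
\[
\varphi(\pi,\lambda_n)\varphi'(\pi,\lambda_k)=\varphi'(\pi,\lambda_n)\varphi(\pi,\lambda_k)\qquad(n\ne k).
\]
Since $\varphi(0,\lambda_n)=1$, the solution $\varphi(x,\lambda_n)$ is nontrivial, so $\big(\varphi(\pi,\lambda_n),\varphi'(\pi,\lambda_n)\big)\ne(0,0)$ for every $n$; the last relation says that any two of these vectors are collinear. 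Consequently there is a pair $(A,B)\ne(0,0)$ with $A\varphi(\pi,\lambda_n)+B\varphi'(\pi,\lambda_n)=0$ for all $n\ge1$.

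\emph{Step 2.} It suffices to show $B=0$; then $A\ne0$ and the identity above gives $\varphi(\pi,\lambda_n)=0$. Since $\varphi(x,\lambda)$ solves (\ref{19})--(\ref{20}), the standard asymptotic estimates for such solutions (cf. \cite{Mam-Kar}) give, as $\lambda\to+\infty$ along the reals,
\[
\varphi(\pi,\lambda)=\varphi_0(\pi,\lambda)+O(\lambda^{-1}),\qquad\varphi'(\pi,\lambda)=\varphi_0'(\pi,\lambda)+O(1).
\]
Because $\lambda_n^0$ is a zero of $\Delta_0=\varphi_0(\pi,\cdot)$, $\Delta_0'$ is bounded on $\R$, and $\lambda_n-\lambda_n^0\to0$ by (\ref{4}), we obtain $\varphi_0(\pi,\lambda_n)=\Delta_0(\lambda_n)-\Delta_0(\lambda_n^0)\to0$; hence $\varphi(\pi,\lambda_n)\to0$, and therefore $B\varphi'(\pi,\lambda_n)\to0$. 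Assume $B\ne0$. Then $\varphi'(\pi,\lambda_n)\to0$, so $\varphi_0'(\pi,\lambda_n)=O(1)$ by the second estimate. But writing $p=\frac{1}{2}\big(1+\frac{1}{\alpha}\big)$, $q=\frac{1}{2}\big(1-\frac{1}{\alpha}\big)$, and using $\varphi_0'(\pi,\lambda)=-\alpha\lambda\big(p\sin\lambda\mu^+(\pi)-q\sin\lambda\mu^-(\pi)\big)$ together with $\mu^+(\pi)+\mu^-(\pi)=2a$, one has the elementary identity
\[
\varphi_0(\pi,\lambda)^2+\frac{1}{\alpha^2\lambda^2}\,\varphi_0'(\pi,\lambda)^2=p^2+q^2+2pq\cos 2a\lambda\ \ge\ (p-|q|)^2>0,
\]
the last inequality because $p>|q|$ (as $0<\alpha\ne1$). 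Along $\lambda=\lambda_n$ the left-hand side tends to $0$ (both terms do), contradicting this lower bound. Hence $B=0$, and so $\varphi(\pi,\lambda_n)=0$ for every $n\ge1$.

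The main obstacle is Step 2: the Lagrange identity alone only pins the boundary data $\big(\varphi(\pi,\lambda_n),\varphi'(\pi,\lambda_n)\big)$ to a common line, and one must bring in the prescribed asymptotics (\ref{4}) together with the Pythagorean-type identity above (which crucially uses $p>|q|$) to rule out the non-Dirichlet alternatives; Step 1 and the verification of the estimate $\varphi'(\pi,\lambda)=\varphi_0'(\pi,\lambda)+O(1)$ are routine.
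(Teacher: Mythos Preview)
Your proof is correct and shares the paper's overall strategy: Step~1 (Lagrange's identity together with the orthogonality of Lemma~2.3, forcing all the boundary vectors $(\varphi(\pi,\lambda_n),\varphi'(\pi,\lambda_n))$ onto a single line) is exactly what the paper does. In Step~2 the paper proceeds a little differently: it first argues by contradiction that $\varphi'(\pi,\lambda_n)\neq0$ for every $n$ (if one vanished, collinearity would force all to vanish, contradicting the asymptotic $\varphi'(\pi,\lambda_n)\approx\varphi_0'(\pi,\lambda_n^0)\neq0$), then writes the common ratio $H=\varphi(\pi,\lambda_n)/\varphi'(\pi,\lambda_n)$ and infers $H=0$ from $\varphi(\pi,\lambda_n)=o(1)$. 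Your route via the explicit identity
\[
\varphi_0(\pi,\lambda)^2+\frac{1}{\alpha^2\lambda^2}\,\varphi_0'(\pi,\lambda)^2\;\ge\;(p-|q|)^2>0
\]
is a tidy, self-contained substitute: it rules out $B\neq0$ directly and, incidentally, supplies the quantitative lower bound on $|\varphi_0'(\pi,\lambda_n^0)|$ that the paper's asymptotic step uses only informally. The two arguments are equivalent in spirit, with yours being more explicit at the point where one must exclude the non-Dirichlet line.
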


\begin{proof}
Since 
\[
-\varphi ^{\prime \prime }(x,\lambda _{n})+q(x)\varphi (x,\lambda
_{n})=\lambda _{n}^{2}\rho (x)\varphi (x,\lambda _{n}),
\]%
\[
-\varphi ^{\prime \prime }(x,\lambda _{m})+q(x)\varphi (x,\lambda
_{m})=\lambda _{m}^{2}\rho (x)\varphi (x,\lambda _{m}),
\]%
we get 
\[
\frac{d}{dx}\left( \varphi (x,\lambda _{n})\varphi ^{\prime }(x,\lambda
_{m})-\varphi ^{\prime }(x,\lambda _{n})\varphi (x,\lambda _{m})\right) =
\]%
\begin{equation}
=\left( \lambda _{n}^{2}-\lambda _{m}^{2}\right) \rho (x)\varphi (x,\lambda
_{n})\varphi (x,\lambda _{m})  \label{27}
\end{equation}%
From (\ref{27}) we have%
\[\left( \lambda _{n}^{2}-\lambda _{m}^{2}\right) \int_{0}^{\pi }\rho
(x)\varphi (x,\lambda _{n})\varphi (x,\lambda _{m})dx=
\]%
\[
=\varphi (\pi ,\lambda _{n})\varphi ^{\prime }(\pi ,\lambda _{m})-\varphi
^{\prime }(\pi ,\lambda _{n})\varphi (\pi ,\lambda _{m}).
\]%
By (\ref{26}) we get%
\begin{equation}
\varphi (\pi ,\lambda _{n})\varphi ^{\prime }(\pi ,\lambda _{m})-\varphi
^{\prime }(\pi ,\lambda _{n})\varphi (\pi ,\lambda _{m})=0.  \label{28}
\end{equation}%
Clearly, $\varphi ^{\prime }(\pi ,\lambda _{n})\neq 0,$ for all $n\geq
1.$ Indeed, if we suppose that $\varphi ^{\prime }(\pi ,\lambda _{m})=0$ for
a certain $m,$ then $\varphi (\pi ,\lambda _{m})\neq 0,$ and in view of (\ref{28}) $\varphi ^{\prime }(\pi ,\lambda _{n})=0$ for all $n.$

On the other hand,%
\[
\varphi'(\pi ,\lambda _{n})=\varphi _{0}'(\pi ,\lambda
_{n})+O( e^{|Im\lambda| \mu ^{+}(x)})
,\quad | \lambda| \rightarrow \infty 
\]%
i.e. for any $n,$ $\varphi ^{\prime }(\pi ,\lambda _{n})\approx \varphi
_{0}^{\prime }(\pi ,\lambda _{n}^{0})\neq 0$ as $n\rightarrow \infty ,$ that
contradicts the condition $\varphi ^{\prime }(\pi ,\lambda _{n})=0,$ $n\neq
m.$ Thus, $\varphi ^{\prime }(\pi ,\lambda _{n})\neq 0,$ for all $n\geq 1
$ and from (\ref{28}) \ we have 
\[
\frac{\varphi (\pi ,\lambda _{n})}{\varphi ^{\prime }(\pi ,\lambda _{n})}=%
\frac{\varphi (\pi ,\lambda _{m})}{\varphi ^{\prime }(\pi ,\lambda _{m})}=H,
\]%
i.e. for any $n$, $\varphi (\pi ,\lambda _{n})=H\varphi ^{\prime }(\pi
,\lambda _{n}).$ Since $\varphi (\pi ,\lambda _{n})=o(1)$ as $n\rightarrow
\infty ,$ we have $ H=0$ i.e. $\varphi (\pi ,\lambda _{n})=0.$
\end{proof}

Thus, we prove that the numbers $\left\{ \lambda _{n}^{2},\alpha _{n}\right\} _{n\geq
1}$ are spectral data of the constructed boundary value problem (\ref{1}), (\ref{2}). Then, the following theorem is proved.  
\begin{theorem}
For the sequences $\left\{ \lambda _{n}^{2},\alpha _{n}\right\} _{n\geq
1},$ where $\lambda _{n}\neq \lambda _{m}$ for $n\neq m,\alpha _{n}>0$ for
all $n$ to be spectral date of a problem $L(q(x))$ of the form (\ref{1})-(%
\ref{3}) with $q(x)\in L_{2}(0,\pi ),$ it is necessary and sufficient to
satisfy conditions 
\[
\lambda _{n}=\lambda _{n}^{0}+\frac{d_{n}}{\lambda _{n}^{0}}+\frac{k_{n}}{n}%
,\ \alpha _{n}=\alpha _{n}^{0}+\frac{t_{n}}{n},\ \left\{ k_{n}\right\}
,\left\{ t_{n}\right\} \in l_{2}
\]%
Here $\lambda _{n}^{0}$ are the zeros of the function 
\[
\Delta _{0}(\lambda )=\frac{1}{2}\left( 1+\frac{1}{\alpha }\right) \cos
\lambda \mu ^{+}(\pi )+\frac{1}{2}\left( 1-\frac{1}{\alpha }\right) \cos
\lambda \mu ^{-}(\pi ),
\]%
\[
\alpha _{n}^{0}=\int_{0}^{\pi }\varphi _{0}^{2}(x,\lambda _{n})\rho (x)dx,
\]%
\[
\varphi _{0}(x,\lambda )=\frac{1}{2}\left( 1+\frac{1}{\sqrt{\rho (x)}}%
\right) \cos \lambda \mu ^{+}(x)+\frac{1}{2}\left( 1-\frac{1}{\sqrt{\rho (x)}%
}\right) \cos \lambda \mu ^{-}(x),
\]%
\[
\mu ^{\pm }(x)=\pm x\sqrt{\rho (x)}+a\left( 1\mp \sqrt{\rho (x)}\right) ,
\]%
$d_{n}$ is a bounded sequence; $\left\{ k_{n}\right\} ,\left\{ t_{n}\right\}
\in l_{2.}$
\end{theorem}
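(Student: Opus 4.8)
\emph{Proof proposal.}
The necessity of the conditions is not the hard direction: the asymptotic formulas \eqref{4} for the eigenvalues $\lambda_n^2$ and the norming numbers $\alpha_n$ of any boundary value problem of the form \eqref{1}--\eqref{3} with $q\in L_2(0,\pi)$ were established in \cite{Mam-Kar}, so for that half I would simply invoke that reference. The content of the theorem is the sufficiency part, and my plan is to run the Gelfand--Levitan--Marchenko reconstruction, adapted to the discontinuous weight $\rho$, and then verify that the object produced has exactly the prescribed spectral data.

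First, treating $\{\lambda_n^2,\alpha_n\}$ as given data subject only to \eqref{4}, $\lambda_n\neq\lambda_m$ $(n\neq m)$ and $\alpha_n>0$, I would build the kernels $F_0(x,t)$, $F(x,t)$ from the series \eqref{7}, \eqref{8}. The crucial preparatory fact is that, because of the $l_2$-remainders in \eqref{4}, the difference series defining $b(x)$ converges in $W_2^1(0,\pi)$ (the analogue of Lemma~1.3.4 of \cite{Yur} quoted above), so that $F_0,F$ have the regularity needed for the main equation \eqref{6} to make sense. By Theorem~1.2 the main equation \eqref{6} then has, for each fixed $x\in[0,\pi]$, a unique solution $A(x,\cdot)\in L_{2,\rho}(0,\mu^+(x))$. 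Using this $A$, I would \emph{define} $\varphi(x,\lambda)$ by \eqref{5} and the candidate potential $q(x)$ by \eqref{13}; differentiating \eqref{13} and using the $W_2^1$-regularity of $A$ inherited from $b$ gives $q\in L_2(0,\pi)$.

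It then remains to show that $\{\lambda_n^2,\alpha_n\}$ really is the spectral data of $L(q)$. From the structure of $F_0$ I would first derive the auxiliary identities \eqref{14}--\eqref{18}: the wave-type equations for $F_0$ and $F$, the boundary relations, and $A(x,0)=0$ together with the transmission relation \eqref{18}. These are precisely the ingredients of Lemma~2.1, which then gives that $\varphi$ solves the differential equation \eqref{19} with the initial data \eqref{20}, so $\varphi(x,\lambda)=\varphi(x,\lambda;q)$. Next, the Parseval-type identity \eqref{24} and its corollary \eqref{25}, applied to $\varphi(\cdot,\lambda_n)$, yield the orthogonality relation \eqref{26}; in particular $\int_0^\pi\rho(x)\varphi^2(x,\lambda_n)\,dx=\alpha_n$, so the $\alpha_n$ are the norming numbers. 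Lemma~2.4 then supplies $\varphi(\pi,\lambda_n)=0$, i.e. each $\lambda_n^2$ is an eigenvalue of \eqref{1}--\eqref{2}. Finally, comparing the characteristic function $\Delta(\lambda)=\varphi(\pi,\lambda)$ with $\Delta_0(\lambda)$ — using that $\varphi-\varphi_0$, hence $\Delta-\Delta_0$, is small in the sense dictated by \eqref{4} — shows that $\{\lambda_n^2\}$ is the \emph{entire} spectrum, with no spurious eigenvalues and correct multiplicities; assembling these points proves that $\{\lambda_n^2,\alpha_n\}$ is the spectral data of $L(q)$.

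The step I expect to be the genuine obstacle is the internal consistency of the construction — establishing the structural relations \eqref{14}--\eqref{18}, and in particular the transmission relation \eqref{18} linking the jump of $A_x$ across $t=\mu^-(x)$ to $\tfrac{d}{dx}A(x,\mu^+(x))$, directly from the main equation \eqref{6}. This is where the discontinuity of $\rho$ at $x=a$ enters, and it is the reason Lemma~2.1 is first proved under the extra hypothesis $b\in W_2^2$ and only then pushed down to the actual class $b\in W_2^1$ by an approximation argument in the spirit of \cite{Fre}. Granting those relations, the remainder is the routine bookkeeping of the Gelfand--Levitan scheme.
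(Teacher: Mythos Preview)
Your proposal follows the paper's own argument essentially step for step: necessity from \cite{Mam-Kar}, then the Gelfand--Levitan reconstruction via \eqref{7}, \eqref{8} and the main equation \eqref{6}, Lemma~2.1 for the differential equation, Lemmas~2.2--2.3 for the Parseval identity and orthogonality \eqref{26}, and Lemma~2.4 for the boundary condition $\varphi(\pi,\lambda_n)=0$. Your final step --- comparing $\Delta(\lambda)=\varphi(\pi,\lambda)$ with $\Delta_0(\lambda)$ to confirm that the $\lambda_n^2$ exhaust the spectrum --- is a point the paper leaves implicit, so your outline is if anything slightly more complete.
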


Algorithm of the construction of the function $q(x)$ by spectral date $%
\left\{ \lambda _{n}^{2},\alpha _{n}\right\} $ follows from the proof of the
Theorem 2.1:

1) By the given numbers $\left\{ \lambda _{n}^{2},\alpha _{n}\right\}
_{n\geq 1}$ the functions $F_{0}(x,t)$ and $F(x,t)$ are constructed by
the formulas (\ref{7}) and (\ref{8}), respectively;

2) The function $A(x,t)$ is found from equation (\ref{6});

3) $q(x)$ is calculated by the formula (\ref{13}).

\section*{Acknowledgement} 
This work is supported by the Scientific and Technological Research Council of Turkey (TUBITAK).


\label{lastpage}

\begin{thebibliography}{99}


\bibitem{Hal}{\small {\sc O. H. Hald}}: {\it Discontinuous inverse eigenvalue problems},\, Comm. Pure Appl. Math. {\bf 37} (1984),  539-577.

\bibitem{Tik-Sam}{\small {\sc A. N. Tikhonov, A. A. Samarskii}}: {\it Equation of mathematical
physics}, \, Dover Books on Physics and Chemistry, Dover New York, (1990).

\bibitem{Tikh} {\small {\sc A. N. Tikhonov}}: {\it On the uniqueness of the solution of the
electric conductivity problem},\, Dokl. Akad. Nauk SSSR, {\bf 69} (1949), 797--800.

\bibitem{Ras} {\small {\sc M. L. Rasulov}}: {\it Methods of Contour Integration},\, Series in
Applied Mathematics and Mechanics, North-Holland Amsterdam {\bf 3} (1967).

\bibitem{She}{\small {\sc D. G. Shepelsky}┼}: {\it The inverse problem of reconstruction of the medium's conductivity in a class of discontinuous and increasing functions},\, Advances in Soviet Mathematics {\bf 19} (1994), 209--231.

\bibitem{And} {\small {\sc R. S. Anderssen}}: {\it The effect of discontinuous in density and shear velocity on the asymptotic overtone structure of torional eigenfrequences of the Earth},\, Geophysical Journal Royal Astronomical Society {\bf 50} (1997), 303--309.

\bibitem{Lap} {\small {\sc F. R. Lapwood, T. Usami}}: {\it Free oscillation of the Earth}, \, Cambridge University Press:Cambridge (1981).

\bibitem{Fre} {\small {\sc G. Freiling, V. Yurko}}: {\it Inverse Sturm-Liouville problems and
their applications},\, Nova Science Publishers, INC. (2008).

\bibitem {Lev-Gas}{\small {\sc B. M. Levitan, M.G. Gasymov}}: {\it  Determination of differential
operator by two spectra},\, Uspekhi mat. Nauk, {\bf 19} (1964) 3-63 (in Russian).

\bibitem{Mar2} {\small {\sc V. A. Marchenko}}: {\it Strum-Liouville Operators and Their
Applications},\, Trans. from the Russian by A. Iacob, Birkhauser Verlag, Basel,
Boston, Stuttgard, (1986).

\bibitem{Lev}{\small {\sc B. M. Levitan}}: {\it Inverse Sturm-Liouville problems},\, Translated
from the Russian by O. E mov. VNU Science Press BV Utrecht (1987).

\bibitem{Lev-Sar}{\small {\sc B. M. Levitan, I. S. Sargsjan}}: {\it Sturm- Liouville and Dirac
Operators},\, Kluwer Academic Publishers Group Dordrecht (1991).

\bibitem{Akht}{\small {\sc A. M. Akhtyamov}}: {\it Theory of identification of boundary
conditions and its applications},\, Fizmatlit Moscow (2009) (in Russian).

\bibitem{Sad-Sul-Akht}{\small {\sc V. A. Sadovnichy, Y. T. Sultanaev, A. M. Akhtyamov}}: {\it Inverse Sturm-Liouville Problems with Nonseparated Boundary Conditions},\, MSU, Moscow. (2009). 

\bibitem{Yur}{\small {\sc V. A. Yurko}}: {\it Inverse spectral problems and their applications},\, 
Saratov (2001) (in Russian).

\bibitem{Gul}{\small {\sc N. J. Guliyev}}: {\it Inverse eigenvalue for Sturm-Liouville equations with spectral parameter linearly contained in one of the boundary conditions},\, Inverse Problems, {\bf 21}(2005), 1315-1330.

\bibitem{Akh1} {\small {\sc E. N. Akhmedova}}: {\it On representation of solution of
Sturm-Liouville equation with discontinuous coefficients},\, Proceedings of
IMM of NAS of Azerbaijan {\bf XVI} XXIV (2002), 5--9.

\bibitem{Akh2} {\small {\sc E. N. Akhmedova, I. M. Huseynov}}: {\it On solution of the inverse Sturm-Liouville problem with discontinuous coefficient},\, Proceedings of IMM of NAS of Azerbaijan, (2007), 33--44. 

\bibitem{Kar-Mam}{\small {\sc D. Karahan, Kh. R. Mamedov}}: {\it Uniqueness of the solution of
the inverse problem for one class of Sturm-Liouville operator},\, Proceedings
of IMM of NAS of Azerbaijan, {\bf 40} Special Issue (2014), 233-244.

\bibitem{Mam-Kar}{\small {\sc Kh. R. Mamedov, D. Karahan}\it}: { On an inverse spectral problem for Sturm–
Liouville operator with discontinuous coefficient},\, Ufimsk. Mat. Zh.,{\bf 7} 3 (2015), 125–-137.

\bibitem{Mam-Kar2}{\small {\sc Kh. R. Mamedov, D. Karahan}}: {\it On the main equation of inverse Sturm-Liouville operator with discontinuous coefficient},\, arXiv: 1508.06626 (2015).

\bibitem{Mam-Cet1}{\small {\sc Kh. R. Mamedov, F. A. Cetinkaya}}: {\it An uniqueness
theorem for a Sturm-Liouville equation with spectral parameter in boundary
conditions},\, Appl. Math. Inf. Sci. {\bf 2} 9 (2015), 981-988.

\bibitem{Mam-Cet2}{\small {\sc Kh. R. Mamedov, F. A. Cetinkaya}}: {\it Inverse problem for a class Sturm-Liouville operator with spectral parameter in boundary condition},\, Boundary Value Problems (2013), 2013:183, doi:10.1186/1687-2772013-183.




\end{thebibliography}
\end{document}